\def\@maketitle{%
  \normalfont\normalsize
  \@adminfootnotes
  \@mkboth{\@nx\shortauthors}{\@nx\shorttitle}%
  \global\topskip42\p@\relax % 5.5pc   "   "   "     "     "
  \@settitle
  \ifx\@empty\authors \else \@setauthors \fi
  \ifx\@empty\@dedicatory
  \else
    \baselineskip22\p@
    \vtop{{\small\itshape\@dedicatory\@@par}%
      \global\dimen@i\prevdepth}\prevdepth\dimen@i
  \fi
  \@setabstract
  \normalsize
  \if@titlepage
    \newpage
  \else
    \dimen@25\p@ \advance\dimen@-\baselineskip
    \vskip\dimen@\relax
  \fi
} % end \@maketitle
\def\@settitle{%
  \vspace*{-15pt}
  \begin{flushleft}%
%    \baselineskip14\p@\relax
    \LARGE\bfseries
%    \uppercasenonmath\@title
    \strut\@title\strut
  \end{flushleft}%
}
\def\@setauthors{%
  \begingroup
  \def\thanks{\protect\thanks@warning}%
  \trivlist
  %\centering
  \raggedright
  \large \@topsep27\p@\relax
  \advance\@topsep by -\baselineskip
  \item\relax
  \author@andify\authors
  \def\\{\protect\linebreak}%
%  \MakeUppercase{\authors}%
  \authors
  \ifx\@empty\contribs
  \else
    ,\penalty-3 \space \@setcontribs
    \@closetoccontribs
  \fi
  \normalfont
% \@setaddresses
  \endtrivlist
  \endgroup
}
\def\@setaddresses{\par
  \nobreak \begingroup
  \small\raggedright
  \def\author##1{\nobreak\addvspace\smallskipamount}%
  \def\\{\unskip, \ignorespaces}%
  \interlinepenalty\@M
  \def\address##1##2{\begingroup
    \par\addvspace\bigskipamount\noindent
    \@ifnotempty{##1}{(\ignorespaces##1\unskip) }%
    {\ignorespaces##2}\par\endgroup}%
  \def\curraddr##1##2{\begingroup
    \@ifnotempty{##2}{\nobreak\noindent\curraddrname
      \@ifnotempty{##1}{, \ignorespaces##1\unskip}\/:\space
      ##2\par}\endgroup}%
  \def\email##1##2{\begingroup
    \@ifnotempty{##2}{\nobreak\noindent E-mail address%
      \@ifnotempty{##1}{, \ignorespaces##1\unskip}\/:\space
      \ttfamily##2\par}\endgroup}%
  \def\urladdr##1##2{\begingroup
    \def~{\char`\~}%
    \@ifnotempty{##2}{\nobreak\noindent\urladdrname
      \@ifnotempty{##1}{, \ignorespaces##1\unskip}\/:\space
      \ttfamily##2\par}\endgroup}%
  \addresses
  \endgroup
  \global\let\addresses=\@empty
}
\def\@setabstracta{%
    \ifvoid\abstractbox
  \else
    \skip@15pt \advance\skip@-\lastskip
    \advance\skip@-\baselineskip \vskip\skip@
%    \chrule\vskip2pt
    \box\abstractbox
    \prevdepth\z@ % because \abstractbox is a vtop
%    \vskip2pt\hrule
    \vskip-15pt
  \fi
}
\renewenvironment{abstract}{%
  \ifx\maketitle\relax
    \ClassWarning{\@classname}{Abstract should precede
      \protect\maketitle\space in AMS document classes; reported}%
  \fi
  \global\setbox\abstractbox=\vtop \bgroup
    \normalfont\small
    \list{}{\labelwidth\z@
      \leftmargin0pc \rightmargin\leftmargin
      \listparindent\normalparindent \itemindent\z@
      \parsep\z@ \@plus\p@
      
    }%
    \item[\hskip\labelsep\bfseries\abstractname.]%
}{%
  \endlist\egroup
  \ifx\@setabstract\relax \@setabstracta \fi
}
\def\ps@headings{\ps@empty
  \def\@evenhead{%
    \setTrue{runhead}%
    \normalfont\scriptsize
    \rlap{\thepage}\hfill
    \def\thanks{\protect\thanks@warning}%
    \leftmark{}{}}%
  \def\@oddhead{%
    \setTrue{runhead}%
    \normalfont\scriptsize
    \def\thanks{\protect\thanks@warning}%
    \rightmark{}{}\hfill \llap{\thepage}}%
  \let\@mkboth\markboth
}\ps@headings
\def\section{\@startsection{section}{1}%
  \z@{-1.4\linespacing\@plus-.5\linespacing}{.8\linespacing}%
  {\normalfont\bfseries\Large}}
\def\subsection{\@startsection{subsection}{2}%
  \z@{-.8\linespacing\@plus-.3\linespacing}{.5\linespacing\@plus.2\linespacing}%
  {\normalfont\bfseries\large}}
\def\subsubsection{\@startsection{subsubsection}{3}%
  \z@{.7\linespacing\@plus.2\linespacing}{-1.5ex}%
  {\normalfont\bfseries}}
\def\@secnumfont{\bfseries}
\renewcommand\contentsnamefont{\bfseries}
\def\@starttoc#1#2{\begingroup
  \setTrue{#1}%
  \par\removelastskip\vskip\z@skip
  \@startsection{}\@M\z@{\linespacing\@plus\linespacing}%
    {.5\linespacing}{%\centering
      \contentsnamefont}{#2}%
  \ifx\contentsname#2%
  \else \addcontentsline{toc}{section}{#2}\fi
  \makeatletter
  \@input{\jobname.#1}%
  \if@filesw
    \@xp\newwrite\csname tf@#1\endcsname
    \immediate\@xp\openout\csname tf@#1\endcsname \jobname.#1\relax
  \fi
  \global\@nobreakfalse \endgroup
  \addvspace{32\p@\@plus14\p@}%
  \let\tableofcontents\relax
}
\def\contentsname{Contents}
\def\l@section{\@tocline{2}{.5ex}{0mm}{5pc}{}}
\def\l@subsection{\@tocline{2}{0pt}{2em}{5pc}{}}
\def\Z{\mathbb{Z}}
\def\Q{\mathbb{Q}}
\def\C{\mathcal{C}}
\def\d{\partial}
\def\+{\oplus}
\newtheoremstyle{theorem-giventitle}
        {}{}              %%% space between body and thm
        {\itshape}                      %%% Thm body font
        {}                              %%% Indent amount (empty = no indent)
        {\bfseries}                     %%% Thm head font
        {.}                             %%% Punctuation after thm head
        {\thm@headsep}                             %%% Space after thm head
        {\thmnote{\bfseries#3}}%%% Thm head spec
\newtheoremstyle{theorem-givenlabel}
        {}{}              %%% space between body and thm
        {\itshape}                      %%% Thm body font
        {}                              %%% Indent amount (empty = no indent)
        {\bfseries}                     %%% Thm head font
        {.}                             %%% Punctuation after thm head
        {\thm@headsep}                             %%% Space after thm head
        {\thmname{#1}~\thmnumber{#3}\setcurrentlabel{#3}}%%% Thm head spec
\newtheoremstyle{definition-giventitle}
        {}{}              %%% space between body and thm
        {}                      %%% Thm body font
        {}                              %%% Indent amount (empty = no indent)
        {\bfseries}                     %%% Thm head font
        {.}                             %%% Punctuation after thm head
        {\thm@headsep}                             %%% Space after thm head
        {\thmnote{\bfseries#3}}%%% Thm head spec
\def\setcurrentlabel#1{\gdef\@currentlabel{#1}}
\newtheorem{theorem}{Theorem}[section]
\newtheorem{theoremalpha}{Theorem}
\newtheorem{lemma}[theorem]{Lemma}
\theoremstyle{definition}
\newtheorem*{question}{Question}
\theoremstyle{theorem-giventitle}
\newtheorem{theorem-named}{}
\theoremstyle{theorem-givenlabel}
\newtheorem{theorem-labeled}{Theorem}
\theoremstyle{definition-giventitle}
\newtheorem{definition-named}{}
\newtheorem{step-named}{}
\numberwithin{equation}{section}
\def\to{\mathchoice{\longrightarrow}{\rightarrow}{\rightarrow}{\rightarrow}}
\newcommand{\shortxra}[2][]{\ext@arrow 0359\rightarrowfill@{#1}{#2}}
\def\longrightarrowfill@{\arrowfill@\relbar\relbar\longrightarrow}
\newcommand{\longxra}[2][]{\ext@arrow 0359\longrightarrowfill@{#1}{#2}}
\begin{document}

\title{Rasmussen $s$-invariants of satellites do not detect slice knots}
\author{Jae Choon Cha}
\address{
  Department of Mathematics\\
  POSTECH\\
  Pohang 790--784\\
  Republic of Korea\quad -- and --
  \linebreak
  School of Mathematics\\
  Korea Institute for Advanced Study \\
  Seoul 130--722\\
  Republic of Korea
}
\email{jccha@postech.ac.kr}

\author{Min Hoon Kim}
\address{
  School of Mathematics\\
  Korea Institute for Advanced Study \\
  Seoul 130--722\\
  Republic of Korea
}
\email{kminhoon@kias.re.kr}

\dedicatory{Dedicated to the memory of Tim Cochran}

\begin{abstract}
  We present a large family of knots for which the Rasmussen
  $s$-invariants of arbitrary satellites do not detect sliceness.
  This answers a question of Hedden.  The proof hinges on work of
  Kronheimer-Mrowka and Cochran-Harvey-Horn.
\end{abstract}

\maketitle

\section{Introduction}

In \cite{Rasmussen:2004-1}, Rasmussen introduced a smooth knot
concordance invariant $s(K)$ using a deformed version of Khovanov
homology.  In general, while invariants from Khovanov homology have
common aspects with and are related to those from Heegaard Floer
homology, it is expected and often confirmed that they behave very
distinctly.  For instance, the volume conjecture tells us that Jones
polynomials of cables contain significantly more information than the
Alexander polynomials of cables (which are completely determined by
the Alexander polynomial of the initially given knot).  In case of the
$s$-invariant, it shares several properties with the $\tau$-invariant
of Ozsv\'ath-Szab\'o and Rasmussen, which may be viewed as its
Heegaard Floer ``analog'', but Hedden and Ording showed that $s$ is
independent from~$\tau$~\cite{Hedden-Ording:2008-1}.  Regarding the
cabling, Hedden asked the following question:

\begin{question}[{\cite[Question 1.4]{Hedden:2009-1}}]
  Does the Rasmussen $s$-invariant, applied to all iterated cables of
  $K$, determine whether $K$ is smoothly slice?
\end{question}

We remark that the behavior of $\tau$ under cabling is well understood
by work of Hedden and
Hom~\cite{Hedden:2005-1,Hedden:2009-1,Hom:2014-2}.  See also
\cite{VanCott:2010-1, Petkova:2013-1}.  In particular the $\tau$
version of the above question was answered in the negative, in a
similar fashion to the Alexander polynomial case (but in a more
sophisticated way)~\cite{Hedden:2009-1,Hom:2014-2}.  The $s$-invariant
case was left open, mainly because of the difficulty of analyzing the
Khovanov chain complex of cables.

\medskip

The goal of this note is to answer Hedden's question on the
$s$-invariant by presenting a large family of counterexamples.  To
state it, we use the following condition for a knot $K$ in~$S^3$,
motivated by work of Kronheimer and
Mrowka~\cite{Kronheimer-Mrowka:2013-1} and Cochran, Harvey, and
Horn~\cite{Cochran-Harvey-Horn:2012-1}:

\begin{enumerate}
  [labelindent=0mm,leftmargin=\widthof{\textbf{(KM)}\space\space},label=*,topsep=\medskipamount]
\item[\textbf{(KM)}] There exist pairs $(V_+,D_+)$, $(V_-,D_-)$ of a compact
  smooth 4-manifold $V_\pm$ and a smoothly embedded disk $D_\pm$ in
  $V_\pm$ such that $\partial(V_\pm,D_\pm)=(S^3,K)$, $b_1(V_\pm)=0$,
  $V_{\pm}$ is $\pm$-definite, i.e., $b^\pm_2(V_\pm)=b_2(V_\pm)$, and
  $[D_\pm, \d D_\pm]=0$ in $\pi_2(V_\pm, S^3)$.
\end{enumerate}

By \cite[Corollary~1.1]{Kronheimer-Mrowka:2013-1}, $s(K)=0$ if $K$
satisfies~(KM)\@.

For knots $K\subset S^3$ and $P \subset S^1\times D^2$, denote by
$P(K)$ the satellite knot with pattern $P$ and companion~$K$.  Denote
the unknot by~$U$.

\begin{theoremalpha}
  \label{theorem:satellite-s-invariant}
  If $K$ is a knot satisfying \textup{(KM)}, then $s(P(K))=s(P(U))$
  for any pattern $P\subset S^1\times D^2$.
\end{theoremalpha}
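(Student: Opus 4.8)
\emph{Strategy and reduction.} The plan is to deduce the equality from the already-cited fact that condition \textup{(KM)} forces $s=0$. Set $J:=P(K)\mathbin{\#}(-P(U))$, where $-P(U)$ denotes the concordance inverse (the reverse of the mirror) of $P(U)$. Recall that $s$ is additive under connected sum and satisfies $s(-L)=-s(L)$, so that $s(J)=s(P(K))-s(P(U))$. Hence it suffices to show that $J$ itself satisfies \textup{(KM)}: once this is established, the statement quoted after the definition of \textup{(KM)} gives $s(J)=0$, i.e. $s(P(K))=s(P(U))$. Thus the whole problem reduces to producing, for each sign, a $\pm$-definite compact smooth $4$-manifold with $b_1=0$ and boundary $S^3$ in which $J$ bounds a nullhomologous smoothly embedded disk.

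\emph{The satellite cobordism.} I treat the $+$ case; the $-$ case is verbatim with $(V_-,D_-)$. Start from $D_+\subset V_+$ with $\partial D_+=K$. Since $[D_+,\d D_+]=0$ in $\pi_2(V_+,S^3)$, the relative self-intersection vanishes, so the normal bundle of $D_+$ is trivial and its product framing restricts on $K=\partial D_+$ to the Seifert ($0$-)framing. Fix an identification $\nu(D_+)\cong D_+\times D^2$ compatible with the $0$-framed identification $\nu(K)\cong S^1\times D^2$ used to define the satellite; note that $\nu(K)=(\partial D_+)\times D^2\subset \partial V_+$. Writing $D_+$ in polar coordinates $\{(r,\theta):0\le r\le1,\ \theta\in S^1\}$ with $\partial D_+=\{r=1\}$, sweep the pattern radially to form the embedded annulus $\mathcal A=\{(r,\theta,z)\in D_+\times D^2:(\theta,z)\in P,\ \tfrac12\le r\le1\}$. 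Its outer boundary $\{r=1\}$ is $P\subset\nu(K)\subset\partial V_+$, namely $P(K)$; its inner boundary $\{r=\tfrac12\}$ is $P$ placed in the solid torus $\{r=\tfrac12\}\times D^2$, whose core $\{r=\tfrac12,\,z=0\}$ bounds the subdisk $\{r\le\tfrac12,\,z=0\}$ and is therefore unknotted with its product framing, so the inner knot is exactly $P(U)$.

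\emph{Folding into a disk.} Remove the round ball $N=\{r\le\tfrac12\}\times D^2\cong B^4$, and set $X_+:=V_+\sm\inte N$. Then $\partial X_+=S^3\sqcup(-S^3)$ and $\mathcal A$ is a properly embedded annulus in $X_+$ running from $P(U)$ (on $\partial N$) to $P(K)$ (on $\partial V_+$). Deleting a ball leaves $b_1$, $b_2$ and the intersection form unchanged, so $X_+$ is positive-definite with $b_1=0$. Now join the two boundary $3$-spheres by a $4$-dimensional $1$-handle and band $\mathcal A$ across it; because the two boundary components are distinct, this changes neither $b_1$ nor the intersection form, the new boundary is $S^3$, and the annulus together with the band becomes an embedded disk bounding $P(K)\mathbin{\#}(-P(U))=J$, which is readily seen to be nullhomologous since it is assembled from the sweep $\mathcal A$ inside the contractible region $\nu(D_+)$ and a single band. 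Running the same construction with $(V_-,D_-)$ yields the negative-definite companion, so $J$ satisfies \textup{(KM)} and the proof closes as in the first paragraph.

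\emph{Main obstacle.} The one delicate point is framing bookkeeping: the product framing of $\nu(D_+)$ must restrict on $K$ to precisely the $0$-framing used to build the satellite $P(K)$, and the inner product structure must likewise present the other end as $P(U)$ with its standard framing. Both are guaranteed exactly by the clause $[D_+,\d D_+]=0$ in $\pi_2(V_+,S^3)$, which kills the relative self-intersection; this is where that hypothesis of \textup{(KM)} is indispensable, for otherwise the two ends of $\mathcal A$ would be satellites with shifted framings and the cobordism would compare the wrong knots. The remaining points---that the radial sweep is embedded, that the inner solid torus indeed presents $P(U)$, and that definiteness, $b_1=0$ and nullhomology survive deleting a ball and attaching a $1$-handle---are routine.
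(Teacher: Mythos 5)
Your reduction is the right one, and the same as the paper's: $J=P(K)\#-P(U)$ is the satellite of $K$ with the slice pattern $P\#-P(U)$, and everything comes down to showing $J$ satisfies (KM). The gap is in the ``folding'' step. Attaching a $4$-dimensional $1$-handle to the \emph{connected} manifold $X_+=V_+\setminus\inte N$ yields a manifold homotopy equivalent to $X_+\vee S^1$, so $b_1$ jumps from $0$ to $1$; that the two feet lie in different components of $\partial X_+$ is irrelevant, since what matters is that they lie in the same component of $X_+$. Moreover, the union of the annulus $\mathcal{A}$ with a band joining its two boundary circles is a surface of Euler characteristic $-1$ with connected boundary, i.e.\ a once-punctured torus, not a disk. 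So as written your $J$ bounds a genus-one surface in a manifold with $b_1=1$: neither clause of (KM) holds, and Kronheimer--Mrowka then gives only $|s(J)|\le 2$, not $s(J)=0$.

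Both defects are cured by performing the complementary operation: delete an open tubular neighborhood of an arc in $\mathcal{A}$ running from the inner boundary to the outer one, rather than adding a handle. Then $N\cup\nu(\gamma)$ is a $4$-ball meeting $\partial V_+$ in a $3$-ball, so its removal changes neither the homotopy type, nor $b_1=0$, nor definiteness, the boundary becomes $S^3\#S^3=S^3$, and $\mathcal{A}\setminus\nu(\gamma)$ is genuinely a disk with boundary $J$. This is exactly what the paper's Lemma~\ref{lemma:satellite-kmcondition} achieves more cleanly: since $Q_P=P\#-P(U)$ is a slice pattern it bounds a disk $\Delta$ in $D^2\times D^2$, and the image of $\Delta$ under an identification $D^2\times D^2\cong\nu(D_\pm)$ is a slice disk for $J=Q_P(K)$ inside the \emph{unmodified} $V_\pm$, so no surgery on the ambient manifold is needed at all. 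Finally, note that (KM) requires the disk to vanish in $\pi_2(V_\pm,S^3)$, not merely to be null-homologous as you assert; the paper obtains this from the fact that the inclusion $(\nu(D_\pm),\nu(\partial D_\pm))\hookrightarrow(V_\pm,S^3)$ is zero on $\pi_2$ because, up to homotopy equivalence, it is the map $(D_\pm,\partial D_\pm)\to(V_\pm,S^3)$, which is zero by hypothesis. You would need an analogous verification in your corrected construction.
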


Consequently, if $K$ satisfies (KM), any iterated cable of $K$ has the
same $s$-invariant as the corresponding iterated cable of the unknot.
This answers Hedden's question in the negative.

The collection of knots satisfying (KM) is large.  For instance,
0-bipolar knots in the sense of \cite{Cochran-Harvey-Horn:2012-1}
satisfy~(KM)\@.  Especially if a knot has a diagram from which a slice
knot is obtained by changing positive crossings and has a (possibly
different) diagram from which a slice knot is obtained by changing
negative crossings, then the knot satisfies
(KM)~\cite[Lemma~3.4]{Cochran-Lickorish:1986-1}.

To describe explicit examples, let $K(a,-b)$ be the knot shown in
Figure~\ref{figure:example}.  Then $K(1,-n)$ and $K(n,-n)$ satisfy
(KM) for any $n>0$ by the above.  It is known that $K(1,-n)$ for $2\ne
n>0$ and $K(n,-n)$ for $n>0$ are not slice, even topologically (e.g.,
see \cite{Casson-Gordon:1986-1, Jiang:1981-1, Kim:2005-2,
  Cha:2003-1}).  In fact, they generate a subgroup isomorphic to
$\Z^\infty \oplus \Z_2^\infty$ in the smooth and topological knot
concordance group.

\begin{figure}[H]

  \begin{tikzpicture}[x=1bp,y=1bp,line width=1pt,join=round]
    \small
    %\drawguidegrid{-50,0}{300,200}

    \begin{scope}[double distance=1pt]
      \draw [draw=white,double=black,line width=1.5pt]
      (10+100*0,20) node (last) {} -- ++(0,15) ..controls +(0,12) and +(-12,0).. ++(25,25) -- ++(10,0) 
      (last.center) ++(10,0) -- ++(0,15) ..controls +(0,7) and +(-7,0).. ++(15,15) -- ++(10,0) node (last) {};
      \foreach \j in {0,1} {
        \draw [draw=white,double=black,line width=1.5pt]
        (last.center) ..controls +(5,0) and +(-5,0).. ++(10,+10);
        \draw [draw=white,double=black,line width=1.5pt]
        (last.center) ++(0,10) ..controls +(5,0) and +(-5,0).. ++(10,-10) node (last) {};
      }
      \foreach \j in {0,1,2} {
        \draw [fill=black, line width=0] (last.center) +(3+14*\j/2,5) circle [radius=1];
      }
      \path (last.center) ++(20,0) node (last) {};
      \foreach \j in {0,1} {
        \draw [draw=white,double=black,line width=1.5pt]
        (last.center) ..controls +(5,0) and +(-5,0).. ++(10,+10);
        \draw [draw=white,double=black,line width=1.5pt]
        (last.center) ++(0,10) ..controls +(5,0) and +(-5,0).. ++(10,-10) node (last) {};
      }
      \draw (last.center) -- ++(10,0) ..controls +(7,0) and +(0,7).. ++(15,-15) -- ++(0,-15)
      (last.center) ++(0,10) -- ++(10,0) ..controls +(12,0) and +(0,12).. ++(25,-25) -- ++(0,-15);

      \draw [draw=white,double=black,line width=1.5pt]
      (10+100*1,20) node (last) {} -- ++(0,15) ..controls +(0,12) and +(-12,0).. ++(25,25) -- ++(10,0) 
      (last.center) ++(10,0) -- ++(0,15) ..controls +(0,7) and +(-7,0).. ++(15,15) -- ++(10,0) node (last) {};
      \foreach \j in {0,1} {
        \draw [draw=white,double=black,line width=1.5pt]
        (last.center) ++(0,10) ..controls +(5,0) and +(-5,0).. ++(10,-10);
        \draw [draw=white,double=black,line width=1.5pt]
        (last.center) ..controls +(5,0) and +(-5,0).. ++(10,+10) ++(0,-10) node (last) {};
      }
      \foreach \j in {0,1,2} {
        \draw [fill=black, line width=0] (last.center) +(3+14*\j/2,5) circle [radius=1];
      }
      \path (last.center) ++(20,0) node (last) {};
      \foreach \j in {0,1} {
        \draw [draw=white,double=black,line width=1.5pt]
        (last.center) ++(0,10) ..controls +(5,0) and +(-5,0).. ++(10,-10);
        \draw [draw=white,double=black,line width=1.5pt]
        (last.center) ..controls +(5,0) and +(-5,0).. ++(10,+10) ++(0,-10) node (last) {};
      }
      \draw (last.center) -- ++(10,0) ..controls +(7,0) and +(0,7).. ++(15,-15) -- ++(0,-15)
      (last.center) ++(0,10) -- ++(10,0) ..controls +(12,0) and +(0,12).. ++(25,-25) -- ++(0,-15);

  \end{scope}

  \draw [line width=.5pt, decoration={brace, amplitude=5, raise=5}, decorate] (45,60) -- (105,60)
  node [pos=0.5,anchor=south,yshift=10] {$a$ positive full twists};
  \draw [line width=.5pt, decoration={brace, amplitude=5, raise=5}, decorate] (145,60) -- (205,60)
  node [pos=0.5,anchor=south,yshift=10] {$b$ negative full twists};

  \draw [cap=round]
  (0,0) ..controls +(-10,0) and +(-10,0).. ++ (0,20) -- ++(10,0) 
  ++(10,0) -- ++(90,0)
  ++(10,0) -- ++(10,0)
  ++(10,0) -- ++(90,0)
  ++(10,0) -- ++(10,0) ..controls +(10,0) and +(10,0).. ++(0,-20) -- (0,0);
  
  \end{tikzpicture}

  \caption{The knot $K(a,-b)$.}
  \label{figure:example}
\end{figure}
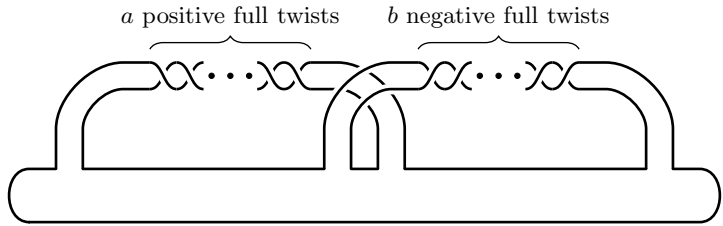

The figure eight knot is the simplest case ($n=1$).  We remark that
$K(n,-n)$ with $n>0$ has order 2 in the knot concordance group, and is
smoothly rationally slice, that is, bounds a smoothly embedded disk in
a rational homology ball whose boundary is~$S^3$.
See~\cite[Theorem~4.16 and Figure~6]{Cha:2003-1} for a proof.
Consequently $P(K(n,-n))$ is rationally concordant to $P(U)$ for
any~$P$.  This relates the case of $K(n,-n)$ to an intriguing open
question (e.g., see~\cite[Question~2.1]{Cha-Powell:2012-1}): if $K$ is
rationally slice, does $s(K)$ vanish?  If so, then one would conclude
immediately that $s(P(K(n,-n)))=s(P(U))$.

Our proof of Theorem~\ref{theorem:satellite-s-invariant}, which is
given in Section~\ref{section:s-invariant-satellite}, shows the
conclusion without the invariance of $s$ under rational concordance.
Our method is largely influenced by work of Cochran, Harvey, and
Horn~\cite{Cochran-Harvey-Horn:2012-1}.

We remark that our argument for the $s$-invariant applies to the case
of the $\tau$-invariant, the $\epsilon$-invariant and the knot Floer
chain complex invariant $[CFK^\infty(K)]$ of
Hom~\cite{Hom:2014-2,Hom:2011-1}, and the $\delta_{p^k}$-invariant of
Manolescu-Owens~\cite{Manolescu-Owens:2007-1} and
Jabuka~\cite{Jabuka:2012-1} as well.  Using this, we observe that
these invariants of arbitrary satellites, even when considered all
together, do not detect slice knots:

\begin{theoremalpha}
  \label{theorem:satellites-of-various-invariants}
  There are knots $K$ which generate a subgroup isomorphic to
  $\Z^\infty \oplus \Z_2^\infty$ in the smooth and topological knot
  concordance groups and satisfy $\bullet(P(K))=\bullet(P(U))$ for any
  pattern $P$ and $\bullet = s$, $\tau$, $\epsilon$, $\delta_{p^k}$,
  $[CFK^\infty(-)]$.
\end{theoremalpha}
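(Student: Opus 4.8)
The plan is to exhibit an explicit family of companions $K$ that simultaneously witnesses both assertions, and then to observe that the proof of Theorem~\ref{theorem:satellite-s-invariant} is, at bottom, a statement about a four-dimensional cobordism that is insensitive to which of the listed invariants one feeds into it. For the family I would take the knots $K(1,-n)$ with $2\ne n>0$ together with $K(n,-n)$ with $n>0$ from Figure~\ref{figure:example}. As recorded in the introduction, every such knot satisfies (KM) via \cite[Lemma~3.4]{Cochran-Lickorish:1986-1}, the $K(1,-n)$ have infinite order while the $K(n,-n)$ have order two, and by the Casson--Gordon-type computations of \cite{Casson-Gordon:1986-1,Jiang:1981-1,Kim:2005-2,Cha:2003-1} they are linearly independent, so together they generate a subgroup isomorphic to $\Z^\infty\oplus\Z_2^\infty$. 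Since these obstructions are already topological, and the order-two relations persist smoothly, the same conclusion holds in both the smooth and the topological concordance group, which settles the first assertion.

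For the second assertion it suffices to prove, for each $K$ in the family and each pattern $P$, the single statement $\bullet(P(K))=\bullet(P(U))$ with $\bullet$ ranging over $s,\tau,\epsilon,\delta_{p^k},[CFK^\infty(-)]$. The case $\bullet=s$ is exactly Theorem~\ref{theorem:satellite-s-invariant}. My strategy for the remaining invariants is to isolate the purely geometric input of that theorem and then invoke, one invariant at a time, the appropriate four-dimensional bound. Concretely, I would first extract from the proof of Theorem~\ref{theorem:satellite-s-invariant} the following construction, which uses only the (KM) data and makes no reference to $s$ itself.

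Starting from a pair $(V_\pm,D_\pm)$ as in (KM), delete from $V_\pm$ a standard ball meeting $D_\pm$ in a trivial subdisk; the result is a $\pm$-definite cobordism from $(S^3,K)$ to $(S^3,U)$ whose trace is an annulus, and deleting a ball changes neither the definiteness nor the vanishing of $b_1$. Now apply the satellite operation with pattern $P$ fibrewise along this annulus; the hypothesis $[D_\pm,\partial D_\pm]=0$ in $\pi_2(V_\pm,S^3)$ guarantees that the framing induced on the annulus is the $0$-framing, so the satellited surface is the genuine concordance trace interpolating $P(K)$ and $P(U)$. The outcome is a genus-zero, null-homologous annular cobordism $A_\pm$ from $P(K)$ to $P(U)$ inside a $\pm$-definite four-manifold $W_\pm$ with $b_1(W_\pm)=0$; only the embedded surface has been altered, not the ambient manifold.

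It then remains to run each invariant through $W_\pm$ with both signs. For $s$ this is the inequality of Kronheimer--Mrowka \cite{Kronheimer-Mrowka:2013-1}, which gives $s(P(K))\le s(P(U))$ from $W_+$ and the reverse from $W_-$. For $\tau$ and $\delta_{p^k}$ I would use the corresponding definite-cobordism estimates in the Cochran--Harvey--Horn framework \cite{Cochran-Harvey-Horn:2012-1}, passing for $\delta_{p^k}$ to the double branched cover and applying Ozsv\'ath--Szab\'o's $d$-invariant inequality for definite fillings as in \cite{Manolescu-Owens:2007-1,Jabuka:2012-1}; each yields a one-sided inequality from $W_+$ and its reverse from $W_-$, hence equality. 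The main obstacle is the pair $\epsilon$ and $[CFK^\infty(-)]$, which are not numerical and so admit no literal inequality: here one must instead show that an annular cobordism in a $\pm$-definite $W_\pm$ with $b_1=0$ and null-homologous trace induces a \emph{local map} of knot Floer complexes in the corresponding direction, so that $W_+$ and $W_-$ together force a local equivalence between $CFK^\infty(P(K))$ and $CFK^\infty(P(U))$, and thus equality of $\epsilon$ and of $[CFK^\infty(-)]$, by Hom's structural results \cite{Hom:2014-2,Hom:2011-1}. Checking that definiteness alone---rather than a genuine homology cobordism---suffices to produce such a local map is where the real work lies, and I expect it to be the crux of the argument.
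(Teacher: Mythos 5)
Your proposal diverges from the paper in two places where the divergence creates real gaps. The first is the one you yourself flag as the crux: for $\epsilon$ and $[CFK^\infty(-)]$ you propose to show that an annular cobordism in a $\pm$-definite $4$-manifold with $b_1=0$ induces a local map of knot Floer complexes. No such statement is available, and I would not expect one: local maps on $CFK^\infty$ come from (rational homology) concordances, and a definite $4$-manifold with large $b_2$ is nothing like a homology cobordism, so ``definiteness with both signs'' does not plausibly force local equivalence. The paper avoids this entirely by quoting Hom's structural results (Theorem~\ref{theorem:hom-equivalence}, from \cite{Hom:2011-1,Hom:2014-2}): the statements $\epsilon(P(K))=\epsilon(P(U))$ for all $P$ and $[CFK^\infty(P(K))]=[CFK^\infty(P(U))]$ for all $P$ are \emph{equivalent} to $\tau(P(K))=\tau(P(U))$ for all $P$, essentially because a satellite of a satellite is again a satellite (e.g.\ $\epsilon$ is determined by $\tau$ of the $(2,\pm1)$-cables). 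So no new Floer-theoretic cobordism maps are needed; you only have to prove the universally quantified $\tau$ statement and then cite Hom.

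The second gap is your choice of family. The knots $K(1,-n)$ and $K(n,-n)$ are only known, via \cite[Lemma~3.4]{Cochran-Lickorish:1986-1}, to be $0$-bipolar, hence to satisfy (KM) and to be $\Z_{(p)}$-homology $0$-bipolar. That suffices for $s$, $\tau$, $\epsilon$, and $[CFK^\infty(-)]$, but not for $\delta_{p^k}$: the vanishing of $\delta_{p^k}$ on satellites by slice patterns requires $\Z_{(p)}$-homology \emph{1}-bipolarity (\ref{item:HBP-delta}, i.e.\ \cite[Theorem~2.8]{Cha-Powell:2012-1}), because the argument passes to the $p^k$-fold branched cover, and the cover of your definite $W_\pm$ need not remain definite with $b_1=0$ unless the surfaces carrying $H_2$ lift --- which is exactly what the extra level of the bipolar filtration buys. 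This is why the paper instead takes its family from \cite[Theorem~7.1]{Cochran-Harvey-Horn:2012-1}, a family of $1$-bipolar knots generating $\Z^\infty\oplus\Z_2^\infty$. Finally, note that the paper's mechanism for each individual equality is not a hand-built annular cobordism but the formal reduction of Lemma~\ref{lemma:main} (consider the slice pattern $Q_P=P\#-P(U)$) together with closure of the (KM) and homology-bipolar classes under satellites with slice patterns (Lemma~\ref{lemma:satellite-kmcondition} and \ref{item:satellite-HBP}); that formal route is what lets all five invariants be treated uniformly with only ``vanishing on a satellite-closed class'' as input.
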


The proof of Theorem~\ref{theorem:satellites-of-various-invariants} is
given in Section~\ref{section:invariant-from-floer-homology}.

\subsection*{Acknowledgment} 

The first author was supported by NRF grants 2013067043 and
2013053914.

\section{Proof of Theorem A: $s$-invariant of satellites}
\label{section:s-invariant-satellite}

Henceforce we assume that everything is smooth.  We say that a pattern
$Q\subset S^1\times D^2$ is \emph{slice} if $Q$ viewed as a knot in
$S^1\times D^2\cup D^2\times S^1 =\partial(D^2\times D^2)=S^3$ is
slice, that is, $Q(U)$ is slice.  Let $\C$ be the knot concordance
group.  The following innocent observation reduces the investigation
of the effect of satellite operations on a homomorphism of $\C$ to the
case of slice patterns.  We state it as a lemma for later use as well:

\begin{lemma}
  \label{lemma:main}
  Suppose $f\colon \C\to A$ is an abelian group homomorphism and $K$
  is a knot in $S^3$ satisfying $f(Q(K))=0$ for any slice pattern~$Q$.
  Then $f(P(K))=f(P(U))$ for any pattern~$P$.
\end{lemma}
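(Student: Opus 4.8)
The plan is to reduce the difference $f(P(K))-f(P(U))$ to the value of $f$ on a single satellite coming from a \emph{slice} pattern, so that the hypothesis forces it to vanish. Since $f$ is a homomorphism and connected sum is the group operation on $\C$, writing $-L$ for the inverse of a knot $L$ in $\C$ (the reverse of the mirror image), one has
\[
  f(P(K))-f(P(U))=f\bigl(P(K)\mathbin{\#}(-P(U))\bigr).
\]
Thus it suffices to exhibit the knot $P(K)\mathbin{\#}(-P(U))$ as $Q(K)$ for a suitable slice pattern~$Q$.

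The key construction is to absorb the local knot $J:=-P(U)$ into the pattern itself. Concretely, I would pick a small ball $B\subset S^1\times D^2$ meeting $P$ in a trivial arc and replace that arc by a knotted arc realizing the connected sum of $P$ with $J$ inside the solid torus; call the resulting pattern $Q$. Because this modification is supported in a ball, it commutes with the satellite operation: for every companion $C$ the embedding $S^1\times D^2\hookrightarrow S^3$ onto a tubular neighborhood of $C$ carries $B$ to a ball disjoint from the rest of the pattern, so
\[
  Q(C)=P(C)\mathbin{\#}J .
\]
In particular $Q(K)=P(K)\mathbin{\#}(-P(U))$, which is exactly the knot appearing above, while $Q(U)=P(U)\mathbin{\#}(-P(U))$.

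Now $Q$ is a slice pattern in the sense defined above, since $Q(U)=P(U)\mathbin{\#}(-P(U))$ is the connected sum of a knot with its concordance inverse and hence is slice. Applying the hypothesis $f(Q(K))=0$ to this slice pattern gives $f\bigl(P(K)\mathbin{\#}(-P(U))\bigr)=0$, and combining with the displayed identity yields $f(P(K))=f(P(U))$, as desired.

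The one point I would check most carefully is the claim $Q(C)=P(C)\mathbin{\#}J$, namely that connect-summing a local knot into the pattern commutes with forming the satellite. This is geometrically evident, but one should confirm that $Q$ is genuinely a pattern (the winding number and the $0$-framing conventions used to build the satellite are unaffected by a modification inside a ball) and that the identification holds as concordance classes, not merely up to isotopy. There is no analytic obstacle here; the entire content of the lemma is the observation that a single slice pattern can ``witness'' the difference between $P(K)$ and $P(U)$.
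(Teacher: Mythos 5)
Your proposal is correct and is essentially identical to the paper's proof: the pattern $Q$ you build by tying the local knot $-P(U)$ into $P$ inside a ball is exactly the paper's $Q_P = P\#-P(U)$, defined as the connected sum of the pairs $(S^1\times D^2, P)$ and $(S^3, -P(U))$, and both arguments conclude via $0=f(Q_P(K))=f(P(K))-f(P(U))$.
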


\begin{proof}
  For a given pattern $P$, let $Q_P$ be the pattern $P\# -P(U)$, that
  is, $(S^1\times D^2, Q_P)$ is the connected sum of two pairs
  $(S^1\times D^2, P)$ and $(S^3, -P(U))$.  Then $Q_P(J)=P(J)\#-P(U)$
  for any knot~$J$.  So $Q_P(U)=P(U)\#-P(U)$ is a ribbon knot in~$S^3$
  and thus $Q_P$ is a slice pattern.  For the given $K$, by the
  hypotheses, we have
  \[
  0 = f(Q_P(K))=f(P(K)\#-P(U))=f(P(K))-f(P(U)). \qedhere
  \]
\end{proof}

The following is a variation of arguments appeared in
\cite[Proposition~3.3]{Cochran-Harvey-Horn:2012-1},
\cite[Theorem~2.6~(6)]{Cha-Powell:2012-1}.

\begin{lemma}
  \label{lemma:satellite-kmcondition}
  Suppose that a knot $K$ in $S^3$ satisfies \textup{(KM)} and
  $Q\subset S^1\times D^2$ is a slice pattern.  Then the satellite
  knot $Q(K)$ satisfies \textup{(KM)}\@.
\end{lemma}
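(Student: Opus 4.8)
The plan is to leave the definite four-manifold untouched and merely replace the bounding disk, exploiting the fact that all of the knotting of $K$ in $\partial V_\pm=S^3$ lives \emph{outside} a tubular neighborhood of $D_\pm$. Fix the two pairs $(V_+,D_+)$, $(V_-,D_-)$ given by (KM) for $K$, and choose a tubular neighborhood $N(D_\pm)\cong D_\pm\times D^2\cong D^2\times D^2$. On the boundary this neighborhood meets $S^3=\partial V_\pm$ in the solid torus $N(K)=\partial D_\pm\times D^2$ whose core is $K$; but inside the ball $N(D_\pm)$ the pair $(N(D_\pm),N(K))$ is the \emph{standard} pair (an unknotted solid torus in $\partial B^4$), and the core disk $D_\pm\times 0$ is the trivial disk bounding the core circle. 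Since $Q$ is a slice pattern, $Q(U)$ bounds a smooth slice disk $\Delta\subset B^4$ with $\partial\Delta=Q$ lying in exactly this standard solid torus. I would therefore set $W_\pm:=V_\pm$ and let $E_\pm$ be the disk obtained by deleting $D_\pm\times 0$ from $N(D_\pm)$ and inserting $\Delta$ in its place; by construction $E_\pm$ is smoothly and properly embedded in $V_\pm$ with $\partial E_\pm=Q(K)\subset S^3$.

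With this choice the definiteness hypotheses are automatic: since $W_\pm=V_\pm$ is literally the same four-manifold it is still $\pm$-definite with $b_1(W_\pm)=b_1(V_\pm)=0$, and $\partial(W_\pm,E_\pm)=(S^3,Q(K))$. It then remains to verify $[E_\pm,\partial E_\pm]=0$ in $\pi_2(W_\pm,S^3)$, which I would do by comparing $E_\pm$ with the original disk inside the neighborhood. As $N(D_\pm)\cong B^4$ is contractible, the boundary map $\pi_2(N(D_\pm),N(K))\to\pi_1(N(K))\cong\Z$ is an isomorphism, and under it $[\Delta]\mapsto[\partial\Delta]=w$ while $[D_\pm\times 0]\mapsto 1$, the generator, where $w$ is the winding number of $Q$. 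Hence $[\Delta]=w\,[D_\pm\times 0]$ in $\pi_2(N(D_\pm),N(K))$, and pushing this relation forward along the inclusion of pairs $(N(D_\pm),N(K))\hookrightarrow(V_\pm,S^3)$ yields $[E_\pm]=w\,[D_\pm]=0$, using the (KM) hypothesis $[D_\pm]=0$ in $\pi_2(V_\pm,S^3)$.

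The one point that requires genuine care, and which I expect to be the main obstacle, is the framing: inserting $\Delta$ yields the honest (Seifert-framed) satellite $Q(K)$ only if the product structure $N(D_\pm)\cong D_\pm\times D^2$ restricts on $N(K)=\partial D_\pm\times D^2$ to the $0$-framing of $K$. Because the normal bundle of a disk is trivial and all its trivializations are homotopic, this boundary framing is an invariant $f$ of the pair $(V_\pm,D_\pm)$ that cannot be chosen freely, and an incorrect value would graft a twisted pattern carrying $f$ extra full twists, which need not be slice. I would dispatch this using (KM) once more: since $\partial V_\pm=S^3$, the map $H_2(V_\pm)\to H_2(V_\pm,S^3)$ is an isomorphism, and $f$ equals the self-intersection of the class $\widehat{[D_\pm]}\in H_2(V_\pm)$ corresponding to $[D_\pm]$. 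As $[D_\pm]=0$ in $\pi_2(V_\pm,S^3)$ forces $[D_\pm]=0$ in $H_2(V_\pm,S^3)$, we get $\widehat{[D_\pm]}=0$ and hence $f=0$. Thus the disk framing is the Seifert framing, the grafted disk bounds exactly $Q(K)$, and $(W_\pm,E_\pm)=(V_\pm,E_\pm)$ witnesses (KM) for $Q(K)$.
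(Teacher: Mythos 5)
Your proof is correct and follows essentially the same route as the paper: graft the slice disk of $Q$ into a tubular neighborhood $\nu(D_\pm)\cong D^2\times D^2$ and kill the relative class of the new disk in $\pi_2(V_\pm,S^3)$ using the fact that $\pi_2(\nu(D_\pm),\nu(\partial D_\pm))$ is carried by $[D_\pm,\partial D_\pm]=0$ (your winding-number computation is just an explicit form of the paper's observation that the inclusion of the neighborhood pair induces the zero map on $\pi_2$). The one place you go beyond the paper is the framing check: the paper tacitly takes the normal-bundle trivialization to restrict to the Seifert framing of $K$, whereas you correctly derive this from $[D_\pm]=0$ in $\pi_2(V_\pm,S^3)$ forcing the self-intersection of $D_\pm$ to vanish --- a worthwhile point to make explicit.
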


\begin{proof}
  Suppose $K$ satisfies (KM) via $(V_\pm, D_\pm)$.  Choose a slice
  disk $\Delta\subset D^2\times D^2$ bounded by $Q\subset S^1\times
  D^2\subset \d (D^2\times D^2)$.  Choose a diffeomorphism $D^2\times
  D^2\to \nu(D_\pm)$ onto the normal bundle $\nu(D_\pm)$ which sends
  $D^2\times 0$ to $D_\pm$ and $p\times D^2$ to a normal disk fiber
  for each $p\in D^2$.  Let $\Delta_\pm$ be the image of the disk
  $\Delta$ under the diffeomorphism.  Then $\partial\Delta_\pm =
  Q(K)$, that is, $\Delta_\pm$ is a slice disk for $Q(K)$ in $V_\pm$.

  Now, to show that $Q(K)$ satisfies (KM), it suffices to prove that
  $[\Delta_\pm,\d \Delta_\pm]=0$ in $\pi_2(V_\pm, S^3)$.  Consider the
  following commutative diagram of inclusions, where
  $\nu(\partial D_\pm)=\nu(D_\pm)\cap S^3$.
  \[
  \xymatrix{
    (\Delta_\pm,\partial\Delta_\pm) \ar[r] & 
    (\nu(D_\pm),\nu(\partial D_\pm)) \ar[r]^-k &
    (V_\pm, S^3)]
    \\
    & (D_\pm,\partial D_\pm) \ar[u]^-i \ar[ur]_-j
  }
  \]
  The induced map $j_*$ on $\pi_2$ is zero by the condition~(KM)\@.
  Since $i$ is a homotopy equivalence, $k_* = 0$ on $\pi_2$ too.  It
  follows that $[\Delta_\pm,\d \Delta_\pm]=0$ in $\pi_2(V_\pm, S^3)$.
\end{proof}

Now, we are ready to prove
Theorem~\ref{theorem:satellite-s-invariant}:

\begin{proof}[Proof of Theorem~\ref{theorem:satellite-s-invariant}]
  Suppose that $K$ satisfies (KM)\@.  For any slice pattern $Q\subset
  S^1\times D^2$, $Q(K)$ satisfies (KM) by
  Lemma~\ref{lemma:satellite-kmcondition}, and consequently
  $s(Q(K))=0$ by~\cite[Corollary~1.1]{Kronheimer-Mrowka:2013-1}.
  Since $s\colon\C\to\Z$ is a homomorphism, $s(P(K))=s(P(U))$ for all
  pattern $P\subset S^1\times D^2$ by Lemma~\ref{lemma:main}.
\end{proof}

\section{Proof of
  Theorem~\ref{theorem:satellites-of-various-invariants}: invariants
  from Floer homology}
\label{section:invariant-from-floer-homology}

Our argument used in Section~\ref{section:s-invariant-satellite} can be
applied to $\tau$, $\epsilon$, $[CFK^\infty(-)]$, and $\delta_{p^k}$
in a similar way.  We begin with an observation based on Hom's work,
which will be used to reduce the cases of $\epsilon$ and
$[CFK^\infty(-)]$ to the case of~$\tau$.

\begin{theorem}[Hom]
  \label{theorem:hom-equivalence}
  For two knots $K$ and $K'$, the following are equivalent:
  \begin{enumerate}[label=\textup{(\arabic{*})}]
  \item $\epsilon(K\#-K')=0$.
  \item $[CFK^\infty(K)]=[CFK^\infty(K')]$.
  \item $\tau(P(K))=\tau(P(K'))$ for any pattern $P$.
  \item $\epsilon(P(K))=\epsilon(P(K'))$ for any pattern $P$.
  \item $\epsilon(P(K)\#-P(K'))=0$ for any pattern $P$.
  \item $[CFK^\infty(P(K))]=[CFK^\infty(P(K'))]$ for any pattern $P$.
  \end{enumerate}
\end{theorem}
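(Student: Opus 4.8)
The plan is to treat three results of Hom as black boxes and assemble the six conditions around them. Write $[CFK^\infty(K)]$ for the stable filtered chain homotopy equivalence class. The inputs I will use are: \textup{(H1)} $\tau$ and $\epsilon$ are invariants of $[CFK^\infty(K)]$; \textup{(H2)} $[CFK^\infty(K)]=[CFK^\infty(K')]$ if and only if $\epsilon(K\#-K')=0$; and \textup{(H3)} the class $[CFK^\infty(P(K))]$ of a satellite depends only on the pattern $P$ and on $[CFK^\infty(K)]$ (the bordered Floer computation). Condition $(1)\Leftrightarrow(2)$ is exactly \textup{(H2)}, so the first move is simply to propagate these two classes through satellite operations.

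For the implications that do not require cabling, $(2)\Rightarrow(6)$ is immediate from \textup{(H3)}, and $(6)\Rightarrow(2)$ follows by taking $P$ to be the core $S^1\times 0$ of the solid torus, for which $P(K)=K$. Applying \textup{(H2)} to the pair $P(K),P(K')$ for each fixed $P$ gives $(5)\Leftrightarrow(6)$, and applying \textup{(H1)} to satellites gives $(6)\Rightarrow(3)$ and $(6)\Rightarrow(4)$. I would also record the composition identity $Q(P(K))=(Q\circ P)(K)$: it shows that if $(3)$ holds for the pair $(K,K')$ then it holds for every pair $(P(K),P(K'))$, and likewise for $(4)$. Consequently it suffices to prove the single-pair statements $(3)\Rightarrow(1)$ and $(4)\Rightarrow(1)$; feeding the pair $(P(K),P(K'))$ into these then yields $(3)\Rightarrow(5)$ and $(4)\Rightarrow(5)$, closing the loop with the cluster $(1)\Leftrightarrow(2)\Leftrightarrow(5)\Leftrightarrow(6)$.

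To attack $(3)\Rightarrow(1)$ and $(4)\Rightarrow(1)$ I would invoke Hom's cabling formula, which expresses $\tau(K_{p,q})$ as $p\,\tau(K)$ plus a correction term determined only by $p,q$ and $\epsilon(K)$, and which gives $\epsilon(K_{p,q})=\epsilon(K)$ when $\epsilon(K)\neq 0$ and $\epsilon(K_{p,q})=\epsilon(T_{p,q})$ when $\epsilon(K)=0$. Running both positive and negative $(p,q)$-cables through these formulas, one checks that equality of all cable $\tau$-values forces $\tau(K)=\tau(K')$ and $\epsilon(K)=\epsilon(K')$ (the three sign patterns $\epsilon=1,0,-1$ are separated by comparing positive against negative cables), and that equality of all cable $\epsilon$-values forces $\epsilon(K)=\epsilon(K')$ by the same sign-signature argument. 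This is the routine part.

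The hard part will be promoting this to the full statement $\epsilon(K\#-K')=0$, equivalently $[CFK^\infty(K)]=[CFK^\infty(K')]$. The obstruction is precisely that cables, even iterated ones, only ever recover $\tau(K)$ and $\epsilon(K)$: since $\tau$ and $\epsilon$ of a cable depend on the companion only through its $\tau$ and $\epsilon$, no tower of cables can see more of the complex. Thus the non-cable patterns allowed in $(3)$ and $(4)$ must be used in an essential way, and the crux is the injectivity assertion that the assignment $[CFK^\infty(K)]\mapsto(\tau(P(K)))_{P}$ (respectively $(\epsilon(P(K)))_{P}$) separates equivalence classes. This is exactly the deep content of Hom's bordered-Floer analysis of satellite complexes, and I expect it to be the main obstacle; granting it, the single-pair implications $(3)\Rightarrow(1)$ and $(4)\Rightarrow(1)$ follow, completing the equivalence.
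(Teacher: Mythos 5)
Your assembly of the cluster $(1)\Leftrightarrow(2)\Leftrightarrow(5)\Leftrightarrow(6)$ and the downward implications $(6)\Rightarrow(3),(4)$ is fine, granting your inputs (note that your (H3) is literally the implication $(2)\Rightarrow(6)$, which you may quote from Hom just as the paper quotes $(2)\Leftrightarrow(3)$ from her Corollary~5; and your reduction via $R(P(K))=(R\circ P)(K)$ to the single-pair statements $(3)\Rightarrow(1)$ and $(4)\Rightarrow(1)$ is valid). But those two single-pair implications are a genuine gap, and your proposed resolution would not work as stated: comparing cables of $K$ with cables of $K'$ \emph{separately} can only ever recover $\tau(K)=\tau(K')$ and $\epsilon(K)=\epsilon(K')$, which is strictly weaker than $\epsilon(K\#-K')=0$, and the ``injectivity'' of $[CFK^\infty(K)]\mapsto(\tau(P(K)))_P$ that you then hope to invoke is not a quotable black box --- it is essentially the implication you are trying to prove. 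The idea you are missing is elementary: apply the hypotheses $(3)$ and $(4)$ to auxiliary patterns built from $-K'$, so that the connected sum $K\#-K'$ itself appears as a satellite.

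Concretely, for $(4)\Rightarrow(1)$ take the pattern $Q=\mathrm{core}\#-K'$ (more generally $Q_P:=P\#-P(K')$, as in the paper's proof of Lemma~\ref{lemma:main}): then $Q(K)=K\#-K'$ while $Q(K')=K'\#-K'$ is slice, so hypothesis $(4)$ applied to this one pattern gives $\epsilon(K\#-K')=\epsilon(Q(K))=\epsilon(Q(K'))=0$ outright. For $(3)\Rightarrow(1)$, apply Hom's Theorem~5.2 --- that $\epsilon(J)$ is a function of the pair $(\tau(J_{2,1}),\tau(J_{2,-1}))$ --- to $J=Q(K)=K\#-K'$: writing $Q_\pm$ for the $(2,\pm1)$-cable of the pattern $Q$, one has $J_{2,\pm1}=Q_\pm(K)$, so $(3)$ gives $\tau(J_{2,\pm1})=\tau(Q_\pm(K'))=\tau((K'\#-K')_{2,\pm1})$, and since $\epsilon(K'\#-K')=0$ the determined value is $\epsilon(K\#-K')=0$. (The paper runs the same two tricks in the form $(3)\Rightarrow(4)$ via the cabled patterns $P_\pm$ and $(4)\Rightarrow(5)$ via $Q_P$, together with $\epsilon(J)=0\Rightarrow\tau(J)=0$ for $(4)\Rightarrow(3)$.) With these substitutions your argument closes, and no separation property of the family $(\tau(P(\cdot)))_P$ beyond the already-quoted $(2)\Rightarrow(3)$ is needed.
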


\begin{proof} 
  The equivalences (1)~$\Leftrightarrow$~(2) and (5)~$\Leftrightarrow$~(6)
  are definitions in \cite[Corollary 5]{Hom:2011-1}, and
  (2)~$\Leftrightarrow$~(3) is due to~\cite[Corollary~5]{Hom:2011-1}.
   
  For (3)~$\Rightarrow$~(4), let $P_{\pm}$ be the pattern obtained by
  taking $(2,\pm1)$ cable of $P\subset S^1\times D^2$.  Denote the
  $(p,q)$-cable of a knot $J$ by~$J_{p,q}$.  Then $P(K)_{2,\pm1} =
  P_\pm(K)$.  Hom showed that the value of $\epsilon(J)$ is determined
  by the pair of integers $(\tau(J_{2,1}),
  \tau(J_{2,-1}))$~\cite[Theorem~5.2]{Hom:2011-1}.  Since
  $\tau(P_\pm(K))=\tau(P_\pm(K'))$ by (3), it follows that
  $\epsilon(P(K))=\epsilon(P(K'))$.

  For (4)~$\Rightarrow$~(3), define $Q_P:=P\#-P(K')$ similarly to the
  proof of Theorem~\ref{theorem:satellite-s-invariant}.  Since
  $Q_P(K')=P(K')\#-P(K')$ is slice, $\epsilon(Q_P(K'))=0$.  By (4),
  $\epsilon(Q_P(K))=\epsilon(Q_P(K'))=0$.  Hom showed that $\tau(J)=0$
  whenever $\epsilon(J)=0$~\cite[Theorem~5.2]{Hom:2011-1}.  Applying
  this, it follows that $\tau(Q_P(K))=0$.  Now
  $\tau(P(K))-\tau(P(K'))=\tau(P(K)\#-P(K'))=\tau(Q_P(K))=0$.

  In the above paragraph we have shown that (4) implies
  $\epsilon(Q_P(K))=0$.  Since $Q_P(K) = P(K)\#-P(K')$, this shows
  (4)~$\Rightarrow$~(5).  The converse is a straightforward
  consequence of Hom's result that $\epsilon(J\#J')=\epsilon(J')$
  whenever $\epsilon(J)=0$~\cite[Proposition~3.6~(6)]{Hom:2014-2}:
  indeed, from this and the concordance invariance of $\epsilon$, it follows
  that $\epsilon(J')=\epsilon(J\#-J'\#J')=\epsilon(J)$ if
  $\epsilon(J\#-J')=0$.  The implication (5)~$\Rightarrow$~(4) is a
  special case of this.
\end{proof}

For the cases of $\tau$, $\epsilon$, $[CFK^\infty]$, and
$\delta_{p^k}$, it turns out to be natural to consider the class of
\emph{$R$-homology $n$-bipolar knots} defined
in~\cite{Cha-Powell:2012-1}.  Here $R$ is a subring of~$\Q$.  This is
a homology version of the notion of $n$-bipolar knots introduced
in~\cite{Cochran-Harvey-Horn:2012-1}.  We do not spell out the
definition since we do not use it directly; the readers are referred
to \cite[Definition 2.3]{Cha-Powell:2012-1} for details.  We will use
the following facts only.  For a prime $p$, denote by
$\Z_{(p)}=\{a/b\in \Q \mid a,b\in \Z,\; p\nmid b\}$, the localization
of $\Z$ at~$p$.

\begin{enumerate}[label=(B\arabic{*}),topsep=\medskipamount]
\item\label{item:BP-HBP} An $n$-bipolar knot is $R$-homology
  $n$-bipolar for any~$R$ \cite[p.~1544]{Cha-Powell:2012-1}.
\item\label{item:satellite-HBP} If $K$ is a $R$-homology $n$-bipolar
  knot, then $Q(K)$ is $R$-homology $n$-bipolar for any slice
  pattern~$Q$ \cite[Theorem~2.6~(6)]{Cha-Powell:2012-1}.
\item\label{item:HBP-tau} $\tau(K)=0$ if $K$ is $\Z_{(p)}$-homology
  $0$-bipolar for some prime $p$, or equivalently $\Q$-homology
  bipolar~\cite[Theorem~2.7]{Cha-Powell:2012-1}.
\item\label{item:HBP-delta} $\delta_{p^k}(K)=0$ if $K$ is
  $\Z_{(p)}$-homology
  $1$-bipolar~\cite[Theorem~2.8]{Cha-Powell:2012-1}.
\end{enumerate}

We remark that \ref{item:satellite-HBP}, \ref{item:HBP-tau},
and~\ref{item:HBP-delta} above are mild generalizations of
\cite[Propositions~3.3, 1.2, and~2.8]{Cochran-Harvey-Horn:2012-1}.

% \begin{definition}[{\cite[Definition 2.3]{Cha-Powell:2012-1}}]
%   A knot $K$ in $S^3$ is \emph{$R$-homology $n$-positive}
%   (respectively \emph{$R$-homology $n$-negative}) if $K$
%   bounds a smooth embedded disk $\Delta$ in a compact 4-manifold $V$ with
%   boundary $S^3$ satisfying the following:

%   \begin{enumerate}
%   \item $H_1(V;R)=0$.
%   \item $V$ is positive (respectively negative) definite and there are
%     disjointly embedded connected surfaces $\{S_i\}$ in $V-\Delta$
%     which generate $H_2(V)/\textrm{torsion}$ and satisfy
%     $\pi_1(S_i)\subset \mathcal{R}^n\pi_1(V-\Delta)$.  Here
%     $\mathcal{R}^nG$ designates the \emph{$R$-coefficient derived
%       series} of a group $G$ defined by $\mathcal{R}^0G:=G$ and
%     $\mathcal{R}^{n+1}G:=\Ker(\mathcal{R}^nG\to
%     H_1(\mathcal{R}^nG;R))$.
%   \end{enumerate}
%   A knot $K$ is \emph{$R$-homology $n$-bipolar} if $K$ is $R$-homology
%   $n$-positive and $R$-homology $n$-negative.
% \end{definition}

% It is known that knots satisfying (KM) are $R$-homology $0$-bipolar
% for any~$R$.  Also, $n$-bipolar knots in the sense of
% \cite{Cochran-Harvey-Horn:2012-1} are $R$-homology $n$-bipolar for
% any~$R$.

% The following says that the analog of Lemma~\ref{lemma:main} holds for
% $R$-homology $n$-bipolar case:

% \begin{lemma}[{\cite[Theorem~2.6~(6)]{Cha-Powell:2012-1}}; see
%   also {\cite[Proposition~3.3]{Cochran-Harvey-Horn:2012-1}}]
%   \label{lemma:homology-bipolar-satellite}
%   If $K$ is a $R$-homology $n$-bipolar knot, then $Q(K)$ is
%   $R$-homology $n$-bipolar for any slice pattern~$Q$.
% \end{lemma}

\begin{theorem}
  \label{theorem:HBP-satellite-invariants}
  Suppose $K$ is a knot in $S^3$ and $p$ is a prime.
  \begin{enumerate}[label=\textup{(\arabic{*})}]
  \item If $K$ is $\Z_{(p)}$-homology $0$-bipolar,
    $\bullet(P(K))=\bullet(P(U))$ for any pattern $P$ and
    $\bullet=\tau$, $\epsilon$, $[CFK^\infty(-)]$.
  \item If $K$ is $\Z_{(p)}$-homology $1$-bipolar,
    $\delta_{p^k}(P(K))=\delta_{p^k}(P(U))$ for any pattern $P$.
  \end{enumerate}
  Consequently, if $K$ is $1$-bipolar, $\bullet(P(K))=\bullet(P(U))$
  for any pattern $P$ and $\bullet=s$, $\tau$, $\epsilon$,
  $[CFK^\infty(-)]$.
 \end{theorem}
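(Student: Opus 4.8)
The plan is to reduce each invariant to a single application of Lemma~\ref{lemma:main}, supplemented by facts~\ref{item:BP-HBP}--\ref{item:HBP-delta}. The two invariants $\tau$ and $\delta_{p^k}$ are honest homomorphisms on $\C$, so Lemma~\ref{lemma:main} applies to them verbatim. The one real wrinkle is that $\epsilon$ and $[CFK^\infty(-)]$ are \emph{not} additive under connected sum, so they cannot be fed into Lemma~\ref{lemma:main} directly; for these I would instead establish the $\tau$-statement first and then transport it across Hom's equivalence (Theorem~\ref{theorem:hom-equivalence}) with companion $K'=U$.

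For part~(1), I first treat $\tau$. Fix an arbitrary slice pattern $Q$. Since $K$ is $\Z_{(p)}$-homology $0$-bipolar, fact~\ref{item:satellite-HBP} shows that $Q(K)$ is again $\Z_{(p)}$-homology $0$-bipolar, and then fact~\ref{item:HBP-tau} gives $\tau(Q(K))=0$. As $\tau\colon\C\to\Z$ is a homomorphism, Lemma~\ref{lemma:main} yields $\tau(P(K))=\tau(P(U))$ for every pattern $P$. This is precisely condition~(3) of Theorem~\ref{theorem:hom-equivalence} applied with $K'=U$, so its equivalences $(3)\Leftrightarrow(4)$ and $(3)\Leftrightarrow(6)$ immediately give $\epsilon(P(K))=\epsilon(P(U))$ and $[CFK^\infty(P(K))]=[CFK^\infty(P(U))]$ for all $P$, finishing part~(1).

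Part~(2) runs the identical argument with $\delta_{p^k}$ in place of $\tau$. For a slice pattern $Q$, fact~\ref{item:satellite-HBP} (now with $n=1$) shows $Q(K)$ is $\Z_{(p)}$-homology $1$-bipolar, and fact~\ref{item:HBP-delta} gives $\delta_{p^k}(Q(K))=0$. Since $\delta_{p^k}$ is a homomorphism on $\C$, Lemma~\ref{lemma:main} produces $\delta_{p^k}(P(K))=\delta_{p^k}(P(U))$ for every pattern $P$.

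Finally, for the concluding statement assume $K$ is $1$-bipolar. Because the bipolar filtration is decreasing, $K$ is in particular $0$-bipolar, hence by fact~\ref{item:BP-HBP} it is $\Z_{(p)}$-homology $0$-bipolar for every prime $p$; part~(1) then delivers the $\tau$, $\epsilon$, and $[CFK^\infty(-)]$ equalities. For the $s$-invariant I would instead route through Theorem~\ref{theorem:satellite-s-invariant}: a $0$-bipolar knot satisfies~(KM) (as recorded in the introduction), so Theorem~\ref{theorem:satellite-s-invariant} gives $s(P(K))=s(P(U))$ for all $P$. The only point requiring care is the non-additivity of $\epsilon$ and $[CFK^\infty(-)]$, which is exactly why the proof must be sequenced so that $\tau$ is settled before Theorem~\ref{theorem:hom-equivalence} is invoked; everything else is bookkeeping of which homology-bipolar hypothesis powers which invariant.
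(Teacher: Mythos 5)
Your proof is correct and follows essentially the same route as the paper: reduce $\epsilon$ and $[CFK^\infty(-)]$ to the $\tau$ case via Theorem~\ref{theorem:hom-equivalence}, handle $\tau$ and $\delta_{p^k}$ by combining \ref{item:satellite-HBP} with \ref{item:HBP-tau} or \ref{item:HBP-delta} and then applying Lemma~\ref{lemma:main}, and obtain the $s$-invariant conclusion from (KM) and Theorem~\ref{theorem:satellite-s-invariant}. Your explicit remark on the non-additivity of $\epsilon$ and $[CFK^\infty(-)]$ as the reason for sequencing the argument through $\tau$ is a helpful clarification of a point the paper leaves implicit.
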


 \begin{proof}
   This is shown by a variation of the proof of
   Theorem~\ref{theorem:satellite-s-invariant}\@, using
   \ref{item:satellite-HBP} in place of Lemma~\ref{lemma:main}.  Also,
   we need to use \ref{item:HBP-tau} and \ref{item:HBP-delta} instead
   of \cite[Corollary~1.1]{Kronheimer-Mrowka:2013-1}.  Details are as
   follows.

   (1) By Theorem \ref{theorem:hom-equivalence}, it suffices to prove
   the $\tau$ case.  Suppose $K$ is $\Z_{(p)}$-homology 0-bipolar.
   For an arbitrary slice pattern $Q$, $Q(K)$ is $\Z_{(p)}$-homology
   0-bipolar by~\ref{item:satellite-HBP}.  It follows that
   $\tau(Q(K))=0$ by~\ref{item:HBP-tau}.  By Lemma~\ref{lemma:main},
   $\tau(P(K))=\tau(P(U))$ for any pattern~$P$.

   (2) Suppose that $K$ is $\Z_{(p)}$-homology 1-bipolar.  For any
   slice pattern $Q$, $Q(K)$ is $\Z_{(p)}$-homology 1-bipolar
   by~\ref{item:satellite-HBP}.  It follows that
   $\delta_{p^k}(Q(K))=0$ by~\ref{item:HBP-delta}.  By
   Lemma~\ref{lemma:main}, $\delta_{p^k}(P(K))=\delta_{p^k}(P(U))$ for
   any pattern~$P$.
\end{proof}

\begin{proof}[Proof of
  Theorem~\ref{theorem:satellites-of-various-invariants}]
  There exists a family of 1-bipolar knots, say $\{K_i\}$, which
  generates a subgroup isomorphic to $\Z^\infty \oplus \Z_2^\infty$ in
  the smooth and topological knot concordance groups~\cite[Theorem
  7.1]{Cochran-Harvey-Horn:2012-1}.  Since a 1-bipolar knot satisfies
  (KM) and is $\Z_{(p)}$-homology 1-bipolar, by
  Theorem~\ref{theorem:HBP-satellite-invariants} we have
  $\bullet(P(K))=\bullet(P(U))$ for any pattern $P$ and
  $\bullet=\tau$, $\epsilon$, $[CFK^\infty(-)]$, $\delta_{p^k}$
  when $K$ is $1$-bipolar, especially when $K=K_i$.
\end{proof}

\bibliographystyle{amsalpha}
\renewcommand{\MR}[1]{}
\bibliography{research}

\def\cprime{$'$}
\providecommand{\bysame}{\leavevmode\hbox to3em{\hrulefill}\thinspace}
\providecommand{\MR}{\relax\ifhmode\unskip\space\fi MR }
% \MRhref is called by the amsart/book/proc definition of \MR.
\providecommand{\MRhref}[2]{%
  \href{http://www.ams.org/mathscinet-getitem?mr=#1}{#2}
}
\providecommand{\href}[2]{#2}
\begin{thebibliography}{Hom14b}

\bibitem[CG86]{Casson-Gordon:1986-1}
Andrew Casson and Cameron Gordon, \emph{Cobordism of classical knots}, \`A la
  recherche de la topologie perdue, Birkh\"auser Boston, Boston, MA, 1986, With
  an appendix by P. M. Gilmer, pp.~181--199. \MR{900 252}

\bibitem[Cha07]{Cha:2003-1}
Jae~Choon Cha, \emph{The structure of the rational concordance group of knots},
  Mem. Amer. Math. Soc. \textbf{189} (2007), no.~885, x+95. \MR{MR2343079}

\bibitem[CHH13]{Cochran-Harvey-Horn:2012-1}
Tim~D. Cochran, Shelly Harvey, and Peter Horn, \emph{Filtering smooth
  concordance classes of topologically slice knots}, Geom. Topol. \textbf{17}
  (2013), no.~4, 2103--2162. \MR{3109864}

\bibitem[CL86]{Cochran-Lickorish:1986-1}
Tim~D. Cochran and W.~B.~R. Lickorish, \emph{Unknotting information from
  $4$-manifolds}, Trans. Amer. Math. Soc. \textbf{297} (1986), no.~1, 125--142.
  \MR{87i:57003}

\bibitem[CP14]{Cha-Powell:2012-1}
Jae~Choon Cha and Mark Powell, \emph{Covering link calculus and the bipolar
  filtration of topologically slice links}, Geom. Topol. \textbf{18} (2014),
  no.~3, 1539--1579. \MR{3228458}

\bibitem[Hed05]{Hedden:2005-1}
Matthew Hedden, \emph{On knot {F}loer homology and cabling}, Algebr. Geom.
  Topol. \textbf{5} (2005), 1197--1222. \MR{2171808 (2006m:57042)}

\bibitem[Hed09]{Hedden:2009-1}
\bysame, \emph{On knot {F}loer homology and cabling. {II}}, Int. Math. Res.
  Not. IMRN (2009), no.~12, 2248--2274. \MR{2511910 (2011f:57015)}

\bibitem[HO08]{Hedden-Ording:2008-1}
Matthew Hedden and Philip Ording, \emph{The {O}zsv\'ath-{S}zab\'o and
  {R}asmussen concordance invariants are not equal}, Amer. J. Math.
  \textbf{130} (2008), no.~2, 441--453. \MR{2405163 (2009b:57021)}

\bibitem[Hom14a]{Hom:2014-2}
Jennifer Hom, \emph{Bordered {H}eegaard {F}loer homology and the tau-invariant
  of cable knots}, J. Topol. \textbf{7} (2014), no.~2, 287--326. \MR{3217622}

\bibitem[Hom14b]{Hom:2011-1}
\bysame, \emph{The knot {F}loer complex and the smooth concordance group},
  Comment. Math. Helv. \textbf{89} (2014), no.~3, 537--570. \MR{3260841}

\bibitem[Jab12]{Jabuka:2012-1}
Stanislav Jabuka, \emph{Concordance invariants from higher order covers},
  Topology Appl. \textbf{159} (2012), no.~10-11, 2694--2710. \MR{2923439}

\bibitem[Jia81]{Jiang:1981-1}
Bo~Ju Jiang, \emph{A simple proof that the concordance group of algebraically
  slice knots is infinitely generated}, Proc. Amer. Math. Soc. \textbf{83}
  (1981), no.~1, 189--192. \MR{MR620010 (82h:57008)}

\bibitem[Kim05]{Kim:2005-2}
Se-Goo Kim, \emph{Polynomial splittings of {C}asson-{G}ordon invariants}, Math.
  Proc. Cambridge Philos. Soc. \textbf{138} (2005), no.~1, 59--78. \MR{2127228
  (2005m:57018)}

\bibitem[KM13]{Kronheimer-Mrowka:2013-1}
P.~B. Kronheimer and T.~S. Mrowka, \emph{Gauge theory and {R}asmussen's
  invariant}, J. Topol. \textbf{6} (2013), no.~3, 659--674. \MR{3100886}

\bibitem[MO07]{Manolescu-Owens:2007-1}
Ciprian Manolescu and Brendan Owens, \emph{A concordance invariant from the
  {F}loer homology of double branched covers}, Int. Math. Res. Not. IMRN
  (2007), no.~20, Art. ID rnm077, 21. \MR{2363303 (2008k:57003)}

\bibitem[Pet13]{Petkova:2013-1}
Ina Petkova, \emph{Cables of thin knots and bordered {H}eegaard {F}loer
  homology}, Quantum Topol. \textbf{4} (2013), no.~4, 377--409. \MR{3134023}

\bibitem[Ras10]{Rasmussen:2004-1}
Jacob Rasmussen, \emph{Khovanov homology and the slice genus}, Invent. Math.
  \textbf{182} (2010), no.~2, 419--447. \MR{2729272 (2011k:57020)}

\bibitem[VC10]{VanCott:2010-1}
Cornelia~A. Van~Cott, \emph{Ozsv\'ath-{S}zab\'o and {R}asmussen invariants of
  cable knots}, Algebr. Geom. Topol. \textbf{10} (2010), no.~2, 825--836.
  \MR{2629765 (2011i:57013)}

\end{thebibliography}

\end{document}